\documentclass[12pt]{amsart}
\usepackage{amssymb,amsthm,amsmath,amsfonts}

\usepackage{geometry}
\usepackage{color}
\usepackage{amssymb}
\usepackage{verbatim}
\usepackage{amsmath}
\usepackage[T1]{fontenc}	
\usepackage[utf8]{inputenc}
\usepackage{enumerate}
\usepackage{amsfonts}
\usepackage{graphicx}
\usepackage{color}
\usepackage{bbm}

\newcommand{\mc}[1]{\mathcal{{#1}}}

\newcommand{\e}{\varepsilon}

\newcommand{\dd}{\mathrm{d}}

\newcommand{\1}{\textbf{1}}
\newcommand{\R}{\mathbb{R}}

\newcommand{\one}{\mathbbm{1}}

\DeclareMathOperator{\var}{Var}

\DeclareMathOperator{\vol}{vol}

\theoremstyle{definition}
\newtheorem{theorem}{Theorem} 
 
\newtheorem{proposition}[theorem]{Proposition}

\newtheorem{lemma}[theorem]{Lemma}

\theoremstyle{remark}
\newtheorem*{remark}{Remark}

\begin{document}

\newgeometry{tmargin=3cm, bmargin=3cm, lmargin=3cm, rmargin=3cm}

\title{Lower bounds on non-central sections of isotropic convex bodies }

\author{Jacek Jakimiuk}

\address{(JJ) University of Warsaw, Banacha 2, 02-097 Warsaw, Poland.}
\email{jj406165@mimuw.edu.pl}

\author{Daniel Murawski}

\address{(DM) University of Warsaw, Banacha 2, 02-097 Warsaw, Poland.}
\email{dk.murawski@student.uw.edu.pl}

\author{Piotr Nayar}
\thanks{Research of JJ and PN  was
		partially funded by the National Science Centre, Poland, grant 2024/55/B/ST1/02938}

\address{(PN) University of Warsaw, Banacha 2, 02-097 Warsaw, Poland.}
\email{nayar@mimuw.edu.pl}

\maketitle

\begin{abstract}
For fixed $t_0 \in [0,\sqrt{3}]$  we give asymptotically sharp lower bounds on the quantity $L_K \vol_{d-1}(K \cap H)$, where $H$ is a hyperplane at distance $t_0 L_K$ from the origin,  $K$ is any symmetric isotropic convex body in $\R^d$, and $L_K$ stands for the isotropic constant of $K$.   
\end{abstract}

\section{Introduction}

The investigation of sections of convex sets plays a central role in convex geometry. Much of the research in this direction has been driven by the so-called slicing problem of Bourgain from \cite{B86}, recently solved by Klartag and Lehec in \cite{KL25}. We refer the reader to the monograph \cite{BGVV14} for a comprehensive study of this topic and to a survey \cite{NT23} for sharp inequalities concerning sections of certain specific families of convex sets. 

Let us recall that $K \subseteq \R^d$ is called a convex body if $K$ is a compact convex set with non-empty interior. A convex body $K$ is (origin) symmetric if $K=-K$ and in this case it is called isotropic if it has volume $1$ and  covariance matrix proportional to the identity matrix, 
\[
\int_K x \otimes x \dd x = L_K^2 I_{d \times d},
\]
where the constant $L_K$ is called the isotropic constant of $K$. In \cite{MTW26} Melbourne, Tkocz and Wyczesany considered non-central sections of symmetric isotropic convex bodies. They proved that for every such body and every hyperplane $H$ in $\R^d$ with distance at most $L_K \sqrt{3}$ to the
origin, we have 
\[
\vol_{d-1}(K \cap H) \geq  L_K^{-1} e^{-\sqrt{6}} 2^{-\frac12}
\]  
and this lower bound is sharp. Here $\vol_{d-1}$ stands for the $(d-1)$-dimensional Lebesgue measure. In this article we extend their result by deriving sharp bounds for  hyperplanes at distance $t L_K$ from the origin for  fixed $t \in [0,\sqrt{3}]$.  By  $f_X$, $\sigma^2_X$ we will denote the density and variance of a random variable $X$, respectively. Our main result reads as follows.

\begin{theorem}\label{thm:slice}
Let $K \subseteq \R^d$ be an isotropic convex body.
\begin{enumerate}[(a)]
\item

 For every  $t_0 \in [0,\sqrt{3}]$ and every hyperplane $H$ at distance $t_0 L_K$ from the origin we have 
\[
\vol_{d-1}(K \cap H) \geq  L_K^{-1}   \inf_{b > 0} \sigma_{Y_b}f_{Y_b}(t_0\sigma_{Y_b}),
\]   
where for $b > 0$ by $Y_b$ we denote the random variable with density 
\[
f_{Y_b}(t) = \frac{e^{-|t|}}{2(1-e^{-b})} \one_{[-b, b]}(t).
\]
\item For $t_0 \in [\frac{1}{\sqrt2}, \sqrt{3}]$ we have
\[
	\vol_{d-1}(K \cap H) \geq  L_K^{-1} \frac{1}{\sqrt{2}}e^{-t_0\sqrt2}.
\]
  
\item For every $t_0 \in [0,c_0]$ where 
 \[
    	c_0 =  \inf_{b  >0}  \ \frac{1}{\sigma(b)} \log \left( \frac{\sqrt{3} \sigma(b)}{1-e^{-b}} \right), \textrm{where} \ \  \sigma(b)^2 =  2-\frac{e^{-b}b(b+2)}{1-e^{-b}}, \qquad c_0 \approx 0.63217
    \]
   we have 
   \[
   \vol_{d-1}(K \cap H) \geq  L_K^{-1} \frac{1}{2\sqrt{3}}.
   \]
\end{enumerate}
\end{theorem}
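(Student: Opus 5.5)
We reduce the statement to a one-dimensional extremal problem for the marginal density. Fix a unit vector $\theta$ normal to $H$ and let $X=\langle U,\theta\rangle$, where $U$ is uniform on $K$. Then
\[
\vol_{d-1}(K\cap H)=f_X(t_0L_K).
\]
Isotropicity gives $\sigma_X=L_K$, and symmetry makes $f_X$ even. By Brunn's principle $f_X^{1/(d-1)}$ is concave on its support, so in particular $f_X$ is even and log-concave. Writing $Z=X/\sigma_X$, we get
\[
L_K\vol_{d-1}(K\cap H)=\sigma_Xf_X(t_0\sigma_X)=f_Z(t_0),
\]
with $Z$ even, log-concave and of unit variance. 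Part (a) therefore reduces to the following one-dimensional claim:
\[
\inf\{\sigma_Yf_Y(t_0\sigma_Y):\ Y \text{ even log-concave}\}=\inf_b\sigma_{Y_b}f_{Y_b}(t_0\sigma_{Y_b})\qquad\text{for } t_0\le\sqrt3.
\]
Infinite $d$ allows every log-concave profile, which is why the bound is asymptotically sharp.

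To prove the one-dimensional claim I use a localization and extremal-point argument. Fix $t=t_0\sigma$ and the value $f(t)$. Among even log-concave $f$ on $[0,\infty)$, we want to show that the variance-to-value tradeoff is extremized by two-piece profiles. Write $f=e^{-V}$ with $V$ convex and even, and normalize $f(t)$. We want to maximize $\sigma^2$ subject to $\int f=1$ and $f(t)$ fixed; equivalently, minimize $f(t)$ for fixed variance.

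The standard replacement goes as follows. On $[t,\infty)$, replace $f$ by an exponential $f(t)e^{-a(s-t)}$ with the same mass; by log-concavity this pushes mass outward and increases the second moment. On $[0,t]$, replace $f$ by a constant, or by an exponential that is continuous at $t$, with the same mass, again without decreasing the second moment. The constraint $t_0\le\sqrt3$ is what guarantees $t$ stays inside the support with the right geometry; the uniform distribution on $[-\sqrt3,\sqrt3]$ is the boundary case. This should leave a few-parameter family consisting of densities $e^{-|s|}$-type, truncated at $b$, possibly with a flat piece. A further comparison (affine-in-$V$ perturbation/convexity in parameters) then eliminates the flat part or the truncation, down to the one-parameter family $Y_b$ (after scaling). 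I expect this step to be the main obstacle: making the reductions preserve simultaneously mass, the value at $t$, and monotonicity of variance, and handling the case where the support endpoint lies near $t$. I would first try the Fradelizi–Guédon-type extreme point theorem, i.e.\ extremizers of a linear-constraint problem over log-concave measures are log-affine on their support.

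Parts (b) and (c) follow by analyzing the explicit function
\[
g(b)=\sigma_{Y_b}f_{Y_b}(t_0\sigma_{Y_b})=\frac{\sigma(b)e^{-t_0\sigma(b)}}{2(1-e^{-b})},
\]
valid when $t_0\sigma(b)\le b$; otherwise $g(b)=0$, which must be excluded. That exclusion should come from $t_0\le\sqrt3$ and $\sigma(b)\le b/\sqrt3$ only for $b\to0$, where $Y_b$ becomes uniform. As $b\to\infty$ we get $\sigma\to\sqrt2$ and $g\to\frac1{\sqrt2}e^{-t_0\sqrt2}$. As $b\to0$ we get $\sigma\approx b/\sqrt3$ and $g\to\frac{1}{2\sqrt3}$, which is the uniform value for $t_0\sigma\le b$. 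For (b), show by differentiating $\log g$ in $b$ that $g$ is decreasing when $t_0\ge1/\sqrt2$, so the infimum is the limit at $\infty$. For (c), the inequality $g(b)\ge\frac1{2\sqrt3}$ is equivalent to
\[
t_0\le\frac{1}{\sigma(b)}\log\frac{\sqrt3\sigma(b)}{1-e^{-b}}
\]
for all $b$, which is exactly $t_0\le c_0$.
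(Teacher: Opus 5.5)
Your reduction to one dimension, your overall plan (localize to a few-parameter family, collapse it to $Y_b$, then study $G_{t_0}(b)=\frac{\sigma(b)e^{-t_0\sigma(b)}}{2(1-e^{-b})}$), and your derivation of (c) from (a) via the definition of $c_0$ all agree with the paper. The genuine gap is the central step of (a), which you flag as ``the main obstacle'' and leave open.

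The extreme-point step only brings you to the two-parameter family $f_{a,b}(x)\propto\min(1,e^{-(|x|-a)})\one_{[0,a+b]}(|x|)$. This is a flat top followed by a truncated exponential, not a single log-affine piece. The real work is proving $g(a,b)\ge\inf_{b>0}g(0,b)$, i.e.\ that the flat top can be removed. ``Convexity in parameters'' is neither established nor what makes this work. The paper needs three concrete ingredients.

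(i) If $t_0\sigma\le a$, compare $f_{a,b}$ with the uniform density of the same variance. Equal mass and equal variance force the difference to change sign twice on $[0,\infty)$. Since $f_{a,b}$ is nonincreasing there, $f_{a,b}(0)$ is at least the uniform height, so $g(a,b)\ge\frac{1}{2\sqrt3}=\lim_{b\to0}g(0,b)$.

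(ii) Otherwise $h(a,b)=\frac{\sigma}{2(a+1-e^{-b})}e^{a-t_0\sigma}$ has no interior critical point. A critical point would force $(\sigma^2)_a+e^b(a-e^{-b})(\sigma^2)_b=0$, whereas this quantity equals $\frac{a(2a^2+6ab+3b^2)}{3(a+1-e^{-b})}>0$. The cases $t_0\sigma\ge1$ or $a\ge1$ are handled separately, because there $h$ is decreasing in $b$.

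(iii) On the boundary $b=\infty$ one must show that $a\mapsto h(a,\infty)$ is nondecreasing. This reduces to a polynomial inequality checked at $t_0=\sqrt3$.

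Point (iii), together with $\sqrt3\,\sigma(a,b)\le a+b$, is where the hypothesis $t_0\le\sqrt3$ is actually used. Your preliminary replacement heuristic does not substitute for any of this. Maximizing $\sigma^2$ with $t$ and $f(t)$ fixed is not equivalent to minimizing $f(t_0)$ at unit variance, because the evaluation point $t_0\sigma$ moves when $\sigma$ changes.

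For (b), showing $G_{t_0}'\le0$ for $t_0\ge\frac{1}{\sqrt2}$ is indeed the paper's route, but the sign of this derivative is the entire content and you give no argument. After noting that the relevant expression is decreasing in $t_0$, one must prove $(\sigma(b)^2)'(e^b-1)\bigl(1-\sigma(b)/\sqrt2\bigr)\le2\sigma(b)^2$ for all $b>0$. The paper does this by squaring and verifying that every Taylor coefficient of an exponential polynomial $\sum_{i=0}^{3}Q_i(b)e^{ib}$ is nonnegative.

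Finally, your worry that $g(b)$ might vanish is settled for every $b$, not just as $b\to0$. Since $f_{Y_b}$ is nonincreasing on $[0,b]$, comparing it with the uniform law on $[-b,b]$ gives $\sqrt3\,\sigma(b)\le b$, hence $t_0\sigma(b)\le b$ for all $t_0\le\sqrt3$. This is what makes your closed formula for $g$ valid everywhere, and it is needed for (b) and (c) to follow from (a).
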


\begin{remark}
These results are asymptotically sharp as $d \to \infty$. In the case of point (b) the equality is achieved by the double cone, whereas  in point (c) the worst case is the cube, see Section \ref{sec:sharp} for the details. 
\end{remark}

For a unit vector $u$ let us introduce the section function 
\[
f(t)= \vol_{d-1}(K \cap (u^\perp + t u)). 
\]  
For an isotropic convex body $K$ we clearly  have $\int f(t) \dd t =1$ and $\int t^2 f(t) \dd t = L_K^2$. Therefore $f$ is a density of some random variable $X_0$ with variance $L_K^2$. Moreover, it is a classical fact following from Brunn-Minkowski inequality that $X_0$ is log-concave, that is $f=e^{-V}$ for some convex function $V:\R \to \R \cup \{\infty\}$.  We can now see that proving Theorem \ref{thm:slice} boils down to estimating $ \sigma_Xf(t_0\sigma_X)$ for a log-concave symmetric density $f$. 

 Let $\mathcal S$ be the set of all nonzero real symmetric log-concave random variables. We assume that all densities are upper-semicontinuous. In fact every random variable in $S$ admits a unique upper-semicontinuous density, with possible discontinuity points only on the two boundary points of their support. By $\mc{S}_M$ we shall denote the set of densities from $\mc{S}$ having variance one and support in the interval $[-M,M]$. The equivalent one-dimensional reformulation of Theorem \ref{thm:slice} reads as follows.   

\begin{theorem}\label{thm:main}
For $b > 0$ let $Y_b$ be a random variable with density 
\[
f_{Y_b}(t) = \frac{e^{-|t|}}{2(1-e^{-b})} \one_{[-b, b]}(t).
\]
\begin{enumerate}[(a)]
    \item For $t_0 \in [0, \sqrt{3}]$ we have
    $$\inf_{X \in S} \sigma_Xf(t_0\sigma_X) = \inf_{b > 0} \sigma_{Y_b}f_{Y_b}(t_0\sigma_{Y_b}).$$
    \item We have
    $$\inf_{X \in S} \sigma_Xf(t_0\sigma_X) = \lim_{b \to \infty} \sigma_{Y_b}f_{Y_b}(t_0\sigma_{Y_b}) = \frac{1}{\sqrt{2}}e^{-t_0\sqrt2}$$
    if and only if $t_0 \in [\frac{1}{\sqrt2}, \sqrt{3}]$. This infimum is achieved for the Laplace distribution.
    \item There is a constant $c_0$, such that we have
    $$\inf_{X \in S} \sigma_Xf(t_0\sigma_X) = \lim_{b \to 0} \sigma_{Y_b}f_{Y_b}(t_0\sigma_{Y_b}) = \frac{1}{2\sqrt{3}}$$
    if and only if $t_0 \in [0, c_0]$, where $\frac12 < c_0 < \frac1{\sqrt2}$. This infimum is achieved for the uniform distribution. The constant $c_0$ is given by
    \[
    	c_0 =  \inf_{b  >0}  \ \frac{1}{\sigma(b)} \log \left( \frac{\sqrt{3} \sigma(b)}{1-e^{-b}} \right), \textrm{where} \ \  \sigma(b)^2 =  2-\frac{e^{-b}b(b+2)}{1-e^{-b}}
    \]
    and its numerical value is $c_0 \approx 0.63217$.
\end{enumerate}
\end{theorem}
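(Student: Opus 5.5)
We reduce the problem to a finite-dimensional extremal problem over symmetric log-concave densities with unit variance, via a localization/"pushing mass" argument. By scaling, the quantity $\sigma_X f(t_0\sigma_X)$ is invariant under $X\mapsto \lambda X$, so we may assume $\sigma_X=1$ and minimize $f(t_0)$ over symmetric log-concave densities of variance one. By approximation (truncation and rescaling, using upper semicontinuity of $f$ at $t_0$ when $t_0$ is interior to the support), it is enough to work in $\mc{S}_M$ and let $M\to\infty$. The plan is then to show that an extremizer exists in $\mc{S}_M$ (by compactness of log-concave densities with bounded variance, with $f(t_0)$ upper semicontinuous under the relevant convergence), and that the extremizer must have a very rigid form.

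The main step is a perturbation argument. Fix a minimizer $f=e^{-V}$ with $V$ convex and even. Split at $t_0$. On $[0,t_0]$ we may replace $V$ by the constant $V(t_0)$, flattening the density to the level $f(t_0)$. This keeps $f(t_0)$ and log-concavity, but it decreases mass near the origin, so we renormalize. Since $t_0\le\sqrt3$, the point $t_0$ lies within the support of the variance-one uniform law; this is where the hypothesis $t_0\in[0,\sqrt3]$ is used. On $[t_0,\infty)$ we replace $V$ by its supporting line at $t_0$ cut off at some point $b'$. This is the standard argument that among log-concave tails with given value and given "mass/moment", the two-parameter exponential-truncated family is extremal, as in Melbourne--Tkocz--Wyczesany. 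Comparing masses and second moments (by the crossing lemma for convex $V$ versus affine $V$), one shows that the minimizer can be taken to have density constant on $[-t_0,t_0]$ and exponential-truncated on $[t_0,b']$. Then, by a further one-parameter convexity argument, the flat part can be absorbed, leaving a density of the form $e^{-a|t|}$ on $[-b,b]$, i.e. a rescaled $Y_b$ (including the limits $b\to0$, uniform, and $b\to\infty$, Laplace). This yields (a).

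For (b) and (c) the problem is now explicit. Set $g(b,t_0)=\sigma_{Y_b}f_{Y_b}(t_0\sigma_{Y_b})=\frac{\sigma(b)}{2(1-e^{-b})}e^{-t_0\sigma(b)}$, where $t_0\sigma(b)\le b$ (otherwise the value is $0$, but then $t_0>\sqrt3$ up to the monotonicity of $b/\sigma(b)$). For (c), the infimum equals the uniform value $\frac1{2\sqrt3}$ iff $g(b,t_0)\ge\frac1{2\sqrt3}$ for all $b$. This rearranges exactly to $t_0\le\frac1{\sigma(b)}\log\frac{\sqrt3\sigma(b)}{1-e^{-b}}$ for all $b$, giving $c_0$ by definition. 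Numerics then give $\frac12<c_0<\frac1{\sqrt2}$. For (b), we need $g(b,t_0)\ge \frac1{\sqrt2}e^{-\sqrt2 t_0}$ for all $b$, i.e. $\log\frac{\sqrt2\sigma(b)}{2(1-e^{-b})}\ge t_0(\sigma(b)-\sqrt2)$. Since $\sigma(b)<\sqrt2$, the right side is increasing in $t_0$ towards nonpositive values, and one checks that the condition is equivalent to $t_0\ge\frac1{\sqrt2}$ via the expansion as $b\to\infty$. Indeed, $\sigma(b)^2\approx2-b^2e^{-b}$ and $1-e^{-b}$ compete at order $b^2e^{-b}$ versus $e^{-b}$ after taking logs, so the threshold arises from the second-order term. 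The task is then to verify the inequality for all finite $b$ when $t_0=\frac1{\sqrt2}$, which is an elementary one-variable inequality.

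The main obstacle is the reduction step: rigorously showing that the minimizer is a two-sided truncated exponential. The difficulty is that flattening and straightening change both mass and variance, so each modification must be followed by a renormalization whose effect on $\sigma f(t_0\sigma)$ has the right sign. I would handle this by parametrizing the modified density as $c\,e^{-\tilde V}$ and using that the ratio $\frac{\int t^2 e^{-\tilde V}}{(\int e^{-\tilde V})^3}$, which controls $\sigma f$, moves monotonically under a sign-change (one-crossing) comparison between $V$ and $\tilde V$.
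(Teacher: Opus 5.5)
There is a genuine gap in your reduction for (a), and the intermediate claim is false. You assert that a minimizer can be taken to be constant on $[-t_0,t_0]$ with variance one. Take $t_0=\sqrt3$. A unit-variance symmetric log-concave density that is constant on $[-\sqrt3,\sqrt3]$ must be the uniform law there: compare it with that uniform law using the crossing lemma (Lemma~\ref{lem:intersections}). Its value is $\frac{1}{2\sqrt3}$, far above the true infimum $\frac{1}{\sqrt2}e^{-\sqrt6}$.

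The underlying reason is that flattening $V$ on $[0,t_0]$ pushes mass outward. So $\sigma$ grows and the evaluation point $t_0\sigma$ slides into the decaying part of the density, and the renormalization constant does not compensate. The paper's computation on the boundary $b=\infty$ shows exactly this: flattening the Laplace density near the origin strictly increases $\sigma f(t_0\sigma)$. So this step moves the functional in the wrong direction. It also contradicts your own conclusion that the extremizer is a rescaled $Y_b$, which (apart from the uniform limit $b\to0$) is not constant on $[-t_0,t_0]$ for any $t_0>0$. Your proposed remedy, monotonicity of $\int t^2e^{-\tilde V}/(\int e^{-\tilde V})^3$, only controls $\sigma f(t_0\sigma)$ when $t_0\sigma$ lies where $\tilde V$ is constant. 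Otherwise the factor $e^{-\tilde V(t_0\sigma)}$, with $\sigma$ moving, enters and decides the sign.

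What is missing is a reduction that respects the variance constraint, followed by a genuinely two-parameter analysis. With variance fixed to one, $f\mapsto f(t_0)$ is linear, so the Fradelizi--Gu\'edon extreme-point argument gives a minimizer over $\mc{S}_M$ of the form $c\min(1,e^{-\gamma(|x|-a)})\one_{[0,a+b]}(|x|)$. Here the kink $a$ is a free parameter, unrelated to $t_0$. The core of (a) is then removing the flat part:
\begin{enumerate}
\item If $t_0\sigma\le a$, compare with the uniform law of the same variance; two sign changes force $c(a',0)\le c(a,b)$.
\item If $t_0\sigma>a$, show that $h(a,b)$ has no interior critical points. This uses $\partial_b h<0$ when $t_0\sigma\ge1$ and the identity $(\sigma^2)_a+e^b(a-e^{-b})(\sigma^2)_b=\frac{a(2a^2+6ab+3b^2)}{3(a+1-e^{-b})}>0$. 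It pushes the infimum to $a=0$, $b=0$ or $b=\infty$.
\item Finally, show that $h(a,\infty)$ is increasing in $a$, which is a nontrivial polynomial inequality.
\end{enumerate}
Your ``further one-parameter convexity argument'' stands in for all of this.

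Your (b) and (c) are fine as reductions to one-variable inequalities, with some caveats. Note that $t_0(\sigma(b)-\sqrt2)$ is decreasing, not increasing, in $t_0$; that is why $t_0=\frac1{\sqrt2}$ is the worst case. The inequality at $t_0=\frac1{\sqrt2}$ is not routine: the paper proves the stronger statement $G_{t_0}'\le0$ by squaring and checking the signs of all Taylor coefficients. Likewise, $c_0\ge\frac12$ should be proved rather than read off numerically; the paper does this by showing $G_{1/2}'\ge0$. The bound $c_0<\frac1{\sqrt2}$ follows by letting $b\to\infty$, which gives $c_0\le\frac{1}{\sqrt2}\log\sqrt6$.
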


\noindent We note that the quantity $ \sigma_X f(t_0\sigma_X)$ is scale-invariant, that is, does not change under scaling $X \to \lambda X$ for any $\lambda>0$.

The article is organized as follows. In Section \ref{sec:sharp} we construct convex bodies that asymptotically  give equality in Theorem \ref{thm:slice}.  In Section \ref{sec:reduction} we recall the reduction scheme allowing us to assume that our densities are log two-piece affine. The last three sections are devoted to the proof of points (a), (b) and (c) of Theorem \ref{thm:main}, respectively. 

\section{Equality cases}\label{sec:sharp}

Let $X$ be a symmetric log-concave random variable with density $f=e^{-V}$. We shall show that the quantity $\sigma_X f(t_0 \sigma_X)$ can be obtained as a limit for $n \to \infty$ of expressions of the form 
\[
f_n(t_0): = L_{K_n} \vol_{n}(K_n \cap \{(x,y) \in \R \times \R^n: x= t_0 L_{K_n}\}),
\] 
where $K_n \subseteq \R^{n+1}$ is some sequence of isotropic convex bodies. We take
\[
	K_n = \left\{ (x,y) \in \R \times \R^n: |y| \leq a\left( 1 - \frac{V(bx)}{n} \right)_+     \right\}.
\] 
It is a straightforward computation to check that $K_n$ is convex. Following a similar reasoning to that in \cite{MTW26}, our goal is now to choose positive parameters $a,b$ such that $K_n$ becomes isotropic. Let $v_n$ denote the volume of the unit ball in $\R^n$ and let $c_{k,l}=\int_{\R} s^k (1-V(s)/n)_+^{n+l}$. We have
\[
	\vol_{n+1}(K_n) = v_n a^n \int_{\R} \left( 1 - \frac{V(bx)}{n} \right)_+^n \dd x =  v_n a^n b^{-1} c_{0,0}.
\]
Moreover 
\[
	\int_{K_n} x^2 \dd x \dd y = v_n a^n \int_{\R} x^2 \left( 1 - \frac{V(bx)}{n} \right)^n \dd x = v_n a^n b^{-3} c_{2,0}.
\]
Finally, due to the symmetry of $K_n$ we have that for every $i=1,\ldots,n$
\begin{align*}
	\int_{K_n} y_i^2 \dd y \dd x & = \frac{1}{n} \int_{K_n} |y|^2 \dd y \dd x = v_n \int_0^{\infty} r^{n+1} \1_{\left\{ r \leq a\left( 1 - \frac{V(bx)}{n} \right)_+ \right\}}  \dd r \dd x \\
	& = \frac{v_n a^{n+2}}{n+2} \int_\R\left( 1 - \frac{V(bx)}{n} \right)_+^{n+2} \dd x =  \frac{v_n a^{n+2}b^{-1}}{n+2} c_{0,2}.
\end{align*}
Since the covariance matrix of $K_n$ is diagonal due to the symmetries, we only need to ensure that
\[
	 v_n a^n b^{-1} c_{0,0} = 1, \qquad   b^{-2} c_{2,0} =  \frac{ a^{2}}{n+2} c_{0,2}.
\]
and in this case we have
\[
	L_{K_n}^2 = v_n a^n b^{-3} c_{2,0}.
\]
Solving the above system of equations for $a,b$ we arrive at 
\[
	a = (c_{0,0} v_n)^{- \frac{1}{n+1}} \left( \frac{c_{2,0}}{c_{0,2}}(n+2) \right)^{\frac{1}{2(n+1)}}, \qquad  b = (c_{0,0} v_n)^{ \frac{1}{n+1}} \left( \frac{c_{2,0}}{c_{0,2}}(n+2) \right)^{\frac12 -\frac{1}{2(n+1)}}.
\]
We have the following asymptotic relations as $n \to \infty$,
\[
	c_{2,0} \sim \sigma_X^2, \quad c_{0,0}, c_{0,2} \sim 1, \quad v_n^{- \frac{1}{n+1}} \sim \sqrt{\frac{n}{2\pi e}}, \quad a \sim \sqrt{\frac{n}{2\pi e}}, \quad b \sim \sigma_X \sqrt{2\pi e}.  
\]
We also have
\[
	v_n a^n = v_n (c_{0,0} v_n)^{- \frac{n}{n+1}}   \left( \frac{c_{2,0}}{c_{0,2}}(n+2) \right)^{\frac{n}{2(n+1)}} \sim v_n^{\frac{1}{n+1}} \sigma_X \sqrt{n}  \sim  \sigma_X  \sqrt{2\pi e}
\]
and 
\[
	L_{K_n}^2 = v_n a^n b^{-3} c_{2,0} \sim   \sigma_X  \sqrt{2\pi e} \cdot (\sigma_X \sqrt{2\pi e})^{-3} \sigma_X^2  = \frac{1}{2\pi e}, \quad L_{K_n} b \sim \sigma_X, \quad L_{K_n} v_n a^n \sim \sigma_X.
\]
Therefore 
\[
	f_n(t_0) = L_{K_n} v_n a^n \left( 1 - \frac{V(t_0  L_{K_n} b)}{n}\right)_+^n \xrightarrow[n \to \infty]{} \sigma_X e^{-V(t_0 \sigma_X)} = \sigma_X f(t_0 \sigma_X).
\]
The function $V(x)=2|x|$ (exponential random variable) corresponds to $K_n$ being a double cone and $V(x) = \log 2$ for $|x| \leq 1$ and $V(x)=\infty$ for $|x|>1$ (uniform distribution) to the cylinder, but clearly the same section function can be obtained just by taking the cube.

\section{Reduction scheme}\label{sec:reduction}

We begin with the following well known reduction scheme based on the localization technique developed by Fradelizi and Gu\'edon in \cite{FG06}.

\begin{lemma}\label{lem:degrees_of_freedom}
    In the set $\mc{S}_M$ the infimum of the quantity $f(t_0)$ is attained for some random variable with density of the form
    \[
    f_{a,b,c,\gamma}(x)  = c\left(\one_{[0, a)}(|x|) + e^{-\gamma (|x|-a)} \one_{[a, a+b]}(|x|)\right) = c\min\left(1, e^{-\gamma(|x|-a)}\right)\one_{[0,a+b]}(|x|)
    \]
    for some parameters $a, b, c, \gamma \geq 0$, where $a>0$ or $b>0$.
\end{lemma}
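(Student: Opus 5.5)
We work with the half-densities. Since $f$ is symmetric and log-concave, it is determined by its restriction $g=f|_{[0,\infty)}$. This $g$ is non-increasing and log-concave, supported in $[0,M]$, and satisfies two linear constraints:
\[
\int_0^M g = \tfrac12, \qquad \int_0^M x^2 g(x)\,\dd x = \tfrac12 .
\]
The functional $f\mapsto f(t_0)$ is linear, so we are minimizing a linear functional over log-concave functions subject to two linear constraints. This is exactly the setting of the Fradelizi--Gu\'edon localization theorem \cite{FG06}.

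\textbf{Existence of a minimizer.} First we show that the infimum over $\mc{S}_M$ is attained. Densities in $\mc{S}_M$ with variance one are uniformly bounded, since $\|f\|_\infty \le C/\sigma$ for log-concave $f$. They are also supported in $[-M,M]$. Log-concave functions are locally uniformly Lipschitz away from the boundary of their support. Hence by Helly/Arzel\`a--Ascoli any sequence has a subsequence converging pointwise, except possibly at the two boundary points, to a log-concave limit. By dominated convergence the limit keeps mass one and variance one.

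The value $f(t_0)$ needs care at the boundary point $t_0=\pm a$ of the support. There we use the upper-semicontinuous normalization: the limsup of $f_n(t_0)$ is at most $\bar f(t_0)$. So a minimizing sequence produces a minimizer, possibly after replacing the limit by its u.s.c.\ version, which only increases $f(t_0)$ to the value we control. The case $t_0 > M$ is trivial, since the infimum is then $0$, attained e.g.\ by the uniform density on $[-\sqrt3,\sqrt3]$ if $M\ge\sqrt3$.

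\textbf{Extreme-point step.} By the Fradelizi--Gu\'edon theorem, the extreme points of the convex hull of the set $P_M$ of log-concave $g$ on $[0,M]$ satisfying the two constraints are among functions $g=e^{-V}$ with $V$ piecewise affine having at most $2$ pieces. The number of pieces equals the number of constraints. Since $f\mapsto f(t_0)$ is linear (affine), its infimum over $P_M$ equals its infimum over the closed convex hull. By Krein--Milman/Bauer this infimum is attained at an extreme point, which is therefore such a log-two-piece-affine function. Symmetry lets us work on $[0,M]$ and reflect.

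\textbf{Pinning down the shape.} Monotonicity ($g$ non-increasing on $[0,\infty)$) means every slope of $V$ is nonnegative. A two-piece convex $V$ with nonnegative slopes has the form
\[
V(x) = \gamma_1 x \ \text{ on } [0,a), \qquad V(x) = \gamma_1 a + \gamma(x-a) \ \text{ on } [a,a+b],
\]
with $\gamma\ge\gamma_1\ge0$. The remaining task is to force $\gamma_1=0$, which gives exactly $c\min(1,e^{-\gamma(|x|-a)})$ on $[0,a+b]$.

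\textbf{Main obstacle: forcing $\gamma_1=0$.} The natural approach is to run the localization on the even function $f$ on $[-M,M]$ rather than on the half-line. The extremal $V$ is then convex, even and two-piece on each side. Alternatively, one can count constraints better: on $[-M,M]$ an even convex $V$ has one kink at $0$ for free. The extra kink allowed by the two constraints sits at $\pm a$, giving slopes $0$ then $\gamma$ on each side.

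If this counting does not go through directly, we use a perturbation argument. Suppose $\gamma_1>0$. Perturb within the three-parameter family of densities with two pieces (slopes $\gamma_1,\gamma$ and break point $a$), keeping mass and variance fixed. This leaves a one-parameter curve on which $f(t_0)$ is monotone or has a strictly concave dependence. The extremum is then pushed to an endpoint, namely $\gamma_1=0$, $\gamma_1=\gamma$ (which is the $a=0$ case of the claimed form), or $b=0$.

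I expect this step to be the delicate one. Verifying it requires controlling the sign of a Jacobian-type determinant of the constraint integrals, and the cleanest route is to invoke the Fradelizi--Gu\'edon statement in its symmetric form.
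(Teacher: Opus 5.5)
There is a genuine gap in your proposal, and it sits in the extreme-point step, which is exactly where the shape in the lemma comes from. The Fradelizi--Gu\'edon theorem describes extreme points for log-concave measures on $[0,M]$ cut out by finitely many integral constraints. The property that makes $g$ half of a symmetric log-concave density ($g$ non-increasing with support $[0,L]$) is a shape constraint of a different kind. If you drop it, as your definition of $P_M$ does, you are solving a different problem. For example, for $t_0<1$, take $2g$ uniform on $[\alpha,\beta]$ with $t_0<\alpha<1$ and $\alpha^2+\alpha\beta+\beta^2=3$. This gives an element of $P_M$ with $g(t_0)=0$. So the extreme point FG06 hands you need not be monotone or start at $0$, and ``monotonicity means every slope is nonnegative'' cannot be applied to it. Worse, in $P_M$ a density that is constant on $[0,a]$ and then decays is not even extreme: $0$ is just an endpoint there, so the first piece can be tilted. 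Hence no argument confined to $P_M$ can produce the claimed form. If you keep monotonicity, FG06 does not apply as a black box. Separately, even granting a two-piece $V$ with slopes $0\le\gamma_1\le\gamma$, the conclusion $\gamma_1=0$ (or $a=0$) is precisely where symmetry must be used, and you leave it open. The ``kink at $0$ for free'' count is a heuristic, and the one-parameter curve inside the two-piece family, along which $f(t_0)$ would be monotone or concave, is neither constructed nor justified.

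The missing idea, which the paper takes from Steps III--IV of \cite{MNT21}, is to count perturbation directions directly in $\mc{S}_M$. Let $f=e^{-V}\in\mc{S}_M$ have support $[-L,L]$, with $V$ affine on $[0,L]$ between kinks $0<x_1<\dots<x_k<L$. Put $h(x)=c_0+c_1|x|+\sum_j d_j(|x|-x_j)_+$, imposing $c_1=0$ if $V'(0^+)=0$. On each piece $-\log(1\pm\varepsilon h)$ is convex, and the strictly positive kinks of $V$ (including the one at $0$ when $V'(0^+)>0$) absorb those of $\varepsilon h$. So $f(1\pm\varepsilon h)\in\mc{S}_M$ for small $\varepsilon$ once $\int fh=\int x^2fh=0$. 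These two conditions leave a nonzero $h$ whenever $k+1+\one_{\{V'(0^+)>0\}}\ge 3$. Then $f=\frac12 f(1+\varepsilon h)+\frac12 f(1-\varepsilon h)$ is a midpoint of two distinct elements of $\mc{S}_M$. Non-piecewise-affine $V$ are handled the same way, with $h''=\phi V''$ and $\phi$ bounded. Hence an indecomposable $f$ has at most one kink in $[0,L)$, where $0$ counts as a kink exactly when $V'(0^+)>0$. No kink is the uniform case $b=0$; a kink at $0$ gives $ce^{-\gamma|x|}$ ($a=0$); a kink at $a>0$ gives constant-then-slope-$\gamma$. This is not ``two pieces, then kill $\gamma_1$'': a kink at $0$ uses up the budget exactly like an interior one. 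To get an indecomposable minimizer from your compactness step, choose among all minimizers one maximizing $\int f^2$. If it were a midpoint of distinct $f_1,f_2\in\mc{S}_M$, both would be minimizers, contradicting strict convexity of $\int f^2$. Alternatively, use the paper's regularization $f(t_0)+\varepsilon\int\sqrt f$.
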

\begin{proof}[Proof without heavy lifting.]
We follow the ideas from \cite{MNT21}. For $\e>0$ let us consider the functional $\Phi(f)=f(t_0)+\e \int \sqrt{f}$. This functional is strictly concave. Moreover, it follows from Lemma 10 in \cite{MNT21} that $f(0)$ is uniformly bounded on $\mc{S}_M$ and thus $\Phi(f)$ is also bounded. Let $(f_n) \subset \mc{S}_M$ be a sequence  satisfying $\Phi(f_n) \to \inf_{f \in \mc{S}_M} \Phi(f)$. According to Lemma 12 in \cite{MNT21} without loss of generality we can assume that $(f_n)$ is pointwise convergent to a certain function $f \in \mc{S}_M$. Thus by the Lebesgue dominated convergence theorem the above infimum is attained on $f$. Since $\Phi$ is strictly convex, the function $f$ must be an extremal point of $\mc{S}_M$.  Indeed, if $f=\lambda f_1 + (1-\lambda) f_2$ for some $\lambda \in (0,1)$ and distinct $f_1, f_2 \in \mc{S}_M$ then 
\[
\Phi(f) > \lambda \Phi(f_1) + (1-\lambda) \Phi(f_2) \geq \min(\Phi(f_1), \Phi(f_2)).
\]
Finally, Steps III and IV of the proof of Theorem 1 in \cite{MNT21} show that $f$ has to be of the desired form.

\emph{Proof with heavy lifting.} 
  We argue similarly to \cite{FG06}. By Prokhorov's theorem the set of all probability measures on $I=[-M,M]$ is weak-star compact. By Theorem 2.2 of \cite{B74} the set of log-concave measures on $I$ is closed in weak-star topology, so is the set of symmetric measures. The condition $\int x^2 d\mu = 1$ is also closed. Therefore, the set $\mc{S}_M$ is compact in weak-star topology. The mapping $\varphi$ defined by $\mu \mapsto \frac{d\mu}{dx}(t_0)$ is linear on $\mc{S}_M$, in particular it is concave. Therefore, by the classical Bauer Maximum Principle $\inf \{\varphi(\mu):  \mu \in \mathrm{conv}(\mc{S}_M)\}$ is attained at some extreme point of $\mc{S}_M$, which has to be of the desired form, as in the first argument.
\end{proof}

\noindent Since all even densities on $\R$ can be approximated by compactly supported densities, and by homogeneity we can always assume $\sigma_X=1$, in order to prove Theorem \ref{thm:main} one can focus on densities of the form from Lemma \ref{lem:degrees_of_freedom}. Again by homogeneity of the quantity $\sigma_Xf_X(t_o\sigma_X)$ we can now work under the assumption $\gamma=1$ instead of $\sigma_X = 1$. We can also calculate that in this case we have $c = c(a,b) = \left(2(a+1-e^{-b})\right)^{-1}$. Thus, the density is of the form
\[
f_{a,b}(x) =  \left(2(a+1-e^{-b})\right)^{-1}\min\left(1, e^{-(|x|-a)}\right)\one_{[0,a+b]}(|x|).
\]

\section{Proof of Theorem \ref{thm:main}(\emph{a})}

Be begin with the following well-known lemma.

\begin{lemma}\label{lem:intersections}
    Let $X, Y$ be real symmetric random variables with density functions $f, g$. If there exists $t > 0$ such that $f(x) \leq g(x)$ on $(0,t)$ and $f(x) \geq g(x)$ on $(t, \infty)$, then $\var X \geq \var Y$. The inequality is strict unless $f(x) = g(x)$ almost everywhere.
\end{lemma}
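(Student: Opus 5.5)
Since $X,Y$ are symmetric with densities, both have mean zero, so $\var X - \var Y = \int x^2 (f(x)-g(x))\,\dd x = 2\int_0^\infty x^2 h(x)\,\dd x$, where $h = f-g$. (If one of the variances is infinite the inequality is trivial or the argument below applies to truncations; I would assume both second moments are finite, which is the case in our applications to log-concave densities.) Because both $f$ and $g$ are probability densities and symmetric, $\int_0^\infty h(x)\,\dd x = \frac12\int_\R h = 0$. The plan is the standard "single crossing" argument: subtract the zero quantity $t^2\int_0^\infty h$ and write
\[
\var X - \var Y = 2\int_0^\infty (x^2 - t^2)\, h(x)\,\dd x .
\]

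The key observation is that the integrand is pointwise nonnegative. On $(0,t)$ we have $x^2 - t^2 < 0$ and $h \leq 0$ by hypothesis, while on $(t,\infty)$ we have $x^2-t^2>0$ and $h\geq 0$. Hence $(x^2-t^2)h(x) \geq 0$ for almost every $x>0$, giving $\var X \geq \var Y$. One small technical point is that $\int (x^2-t^2)h$ should be split as $\int x^2 h - t^2 \int h$ legitimately; this is fine since $x^2 f, x^2 g, f, g$ are all integrable on $(0,\infty)$.

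For strictness: if $\var X = \var Y$, then the nonnegative integrand $(x^2-t^2)h(x)$ has zero integral, so it vanishes almost everywhere on $(0,\infty)$. Since $x^2 - t^2 \neq 0$ for $x\neq t$, this forces $h = 0$ a.e.\ on $(0,\infty)$, and by symmetry of $f$ and $g$ also on $(-\infty,0)$, i.e.\ $f=g$ almost everywhere. There is no real obstacle here; the only step requiring care is the use of $\int_0^\infty h = 0$, which relies on the symmetry of both densities (halving the total mass), and ensuring finiteness of second moments so that the subtraction is valid.
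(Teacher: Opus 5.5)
Your proof is correct and is essentially the paper's argument: the paper also bounds $x^2(f-g)$ below by $t^2(f-g)$ separately on $(0,t)$ and $(t,\infty)$ and then uses $\int_0^\infty (f-g)=0$, which is the same as your pointwise nonnegativity of $(x^2-t^2)(f-g)$. Your strictness argument (a nonnegative integrand with zero integral vanishes a.e., and $x^2-t^2\neq 0$ for $x\neq t$) spells out what the paper only asserts.
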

\begin{proof}
We have
\begin{align*}
    \frac12(\var X - \var Y) &= \int_0^t (f(x)-g(x))x^2dx + \int_t^\infty (f(x) - g(x))x^2dx \\
    &\geq \int_0^t (f(x)-g(x))t^2dx + \int_t^{\infty}(f(x)-g(y))t^2dx \\
    &= t^2\int_0^{\infty}f(x)-g(x)dx = 0.
\end{align*}
We note that the inequality is strict unless $f(x)=g(x)$ almost everywhere.
\end{proof}

Let 
\[
\sigma^2 = \sigma^2(a,b) = 2c(a,b)\left(\int_0^a x^2dx + \int_a^{a+b}e^{-(x-a)}x^2dx\right)
\] 
be the variance of $f_{a,b}$. 

\begin{lemma}\label{lem:variance}
    The variance $\sigma^2(a,b)$ is non-decreasing both with respect to $a$ and with respect to $b$. Moreover, we have $\sqrt{3} \sigma(a,b) \leq a+b$.
\end{lemma}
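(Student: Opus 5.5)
The natural tool is Lemma \ref{lem:intersections}. We compare $f_{a,b}$ with $f_{a',b}$ for $a'>a$ (and similarly with $f_{a,b'}$ for $b'>b$), and check that the two densities cross exactly once on $(0,\infty)$.

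\emph{Monotonicity in $a$.} Take $a<a'$. Both densities have the shape $c\min(1,e^{-(|x|-a)})$ on their supports, with normalizing constants $c(a',b)<c(a,b)$, since $c(a,b)=(2(a+1-e^{-b}))^{-1}$ is decreasing in $a$.
\begin{itemize}
\item On $[0,a]$ we have $f_{a,b}=c(a,b)>c(a',b)=f_{a',b}$.
\item On $[a,a']$ the function $f_{a,b}$ is decreasing (possibly dropping to $0$ after $a+b$), while $f_{a',b}$ is the constant $c(a',b)$.
\item On $[a',a+b]$ both densities are exponentials with the same rate. Their ratio is $f_{a',b}/f_{a,b}=\frac{c(a',b)}{c(a,b)}e^{a'-a}$, a constant.
\item On $(a+b,a'+b]$ only $f_{a',b}$ is positive.
\end{itemize}
Consider $h=f_{a',b}-f_{a,b}$. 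If the constant ratio above is $\le 1$, then $h\le 0$ up to $a+b$ and $h>0$ afterwards. Otherwise $h$ changes sign exactly once inside $[a,a']$ (where $f_{a,b}$ decreases against a constant) and stays positive from then on. In either case there is a $t$ with $f_{a',b}\le f_{a,b}$ on $(0,t)$ and $f_{a',b}\ge f_{a,b}$ on $(t,\infty)$. Lemma \ref{lem:intersections}, applied with $X\sim f_{a',b}$ and $Y\sim f_{a,b}$, then gives $\sigma^2(a',b)\ge\sigma^2(a,b)$.

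\emph{Monotonicity in $b$.} Take $b<b'$. The densities are proportional on $[0,a+b]$, with $c(a,b')<c(a,b)$, so $f_{a,b'}<f_{a,b}$ there. On $(a+b,a+b']$ we have $f_{a,b'}>0=f_{a,b}$. The crossing point is $t=a+b$, and the same lemma gives the claim. One could also argue directly: $\sigma^2(a,b)$ is the second moment of $|X|$ where $|X|$ has density proportional to $\min(1,e^{-(x-a)})$ on $[0,a+b]$. Increasing $b$ adds mass beyond every point already in the support, which makes the truncated law stochastically larger.

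\emph{The bound $\sqrt3\,\sigma\le a+b$.} Let $U$ be uniform on $[-(a+b),a+b]$, so that $\var U=(a+b)^2/3$. Both $f_{a,b}$ and $f_U=\frac{1}{2(a+b)}$ are supported on $[-(a+b),a+b]$ and have total mass $1$. Since $f_{a,b}$ is nonincreasing on $(0,a+b)$, it must cross the constant $f_U$ at most once: it is $\ge f_U$ on $(0,t)$ and $\le f_U$ on $(t,a+b)$, and both vanish beyond $a+b$. Lemma \ref{lem:intersections} with $X=U$ and $Y\sim f_{a,b}$ gives $\sigma^2\le (a+b)^2/3$.

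The only step needing care is the sign pattern in the $a$-comparison, specifically the case split according to whether $a'$ exceeds $a+b$. The argument above covers it uniformly, because on every piece $h$ is either of constant sign or crosses zero once while moving from negative to positive.
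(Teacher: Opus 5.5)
Your proof is correct and follows the paper's argument: all three claims come from the single-crossing Lemma \ref{lem:intersections}, and your $b$-comparison and uniform comparison are essentially the paper's. (You also correctly take $U$ uniform on $[-(a+b),a+b]$ with variance $(a+b)^2/3$.) For the $a$-comparison, the paper gets the single crossing more compactly by noting that $\log f_{a,b}-\log f_{a',b}$ is non-increasing on $[0,a+b]$, which is exactly what your piecewise case analysis checks by hand.
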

\begin{proof}
    Note that $c(a,b)$ is decreasing both with respect to $a$ and $b$. Let us now fix $b$ and we will prove that $\sigma^2$ is increasing in $a$. Take some $a'>a$. Consider functions $V_1 = \log f_{a,b}$, $V_2 = \log f_{a',b}$ (we put $\log(0) = -\infty$. Both functions are first constant and then affine with slope $-1$ on an interval of length $b$. Thus, $V_1  - V_2$ is non-increasing on $[0, a+b]$ and $V_2 \geq V_1$ on $[a+b, \infty)$. Therefore either there exists $t_0 \in [0, a+b)$ such that $V_1 \geq V_2$ on $[0, t_0]$ and $V_1 \leq V_2$ on $[t_0, \infty)$ or $t_0 = a+b$ satisfies this condition. By Lemma \ref{lem:intersections} this shows that $\sigma^2(a',b) > \sigma^2(a,b)$.
    
    We will now prove that $\sigma^2(a,b)$ is increasing in $b$. Note that if $b'>b$, then $c(a, b') < c(a,b)$ and the ratio $f_{a,b}/f_{a'b}$ is constant and greater than $1$ on $[0, a+b]$. On the other hand, we have $f_{a,b'} \geq f_{a,b}=0$ on $(a+b, \infty)$, so the desired inequality is again true by Lemma \ref{lem:intersections}.
    
    To prove the second part it suffices to apply Lemma \ref{lem:intersections} to $Y$ with density $f_{a,b}$ and $X$ being uniform on $[a,b]$ as $\var(X)=\frac{1}{\sqrt{3}}(a+b)$.
\end{proof}

We now fix $t_0$ in Theorem \ref{thm:main} and we notice that according to the second part of Lemma \ref{lem:variance} the quantity $\sigma f_{a,b}(t_0 \sigma)$ is equal to
\[
g(a,b) = g_{t_0}(a,b) =  \frac{\sigma}{2(a+1-e^{-b})}\min\left(1, e^{-(t_0\sigma-a)}\right),
\]
where 
\[
\sigma^2 = \sigma^2(a,b) = 2c(a,b) \left(\int_0^a x^2dx + \int_a^{a+b}e^{-(x-a)}x^2dx\right)
\] 
is the variance of the random variable given by the density function $f_{a,b}$. This is well defined as long as $a > 0$ or $b > 0$. It is therefore enough to establish the following proposition.
\begin{proposition}\label{prob:thm1a}
For any nonnegative $a,b$ such that $a+b>0$ we have
\begin{equation}\label{eq:goal}
    g(a,b) \geq \inf_{b > 0} g(0,b).
\end{equation}
\end{proposition}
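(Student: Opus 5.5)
We want to show that the two-parameter family $g(a,b)$ attains its infimum on the boundary $a=0$, i.e.\ among the truncated Laplace densities $f_{Y_b}$. The plan is to fix $b\ge 0$ and study $a\mapsto g(a,b)$ on $[0,\infty)$. The goal is to prove that for each fixed $b$ this function is minimized either at $a=0$, or in a limiting regime that is itself dominated by $\inf_{b'>0} g(0,b')$.

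First I would split according to the sign of $t_0\sigma-a$.

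\textbf{Case $a\ge t_0\sigma$.} Here $g(a,b)=\sigma/(2(a+1-e^{-b}))$, which is the value of the density at a point of its flat part. Such configurations should compare with the uniform case. As $b\to 0$ the density becomes uniform on $[-a,a]$ and $g\to \sigma/(2a)=1/(2\sqrt3)$. Also $\lim_{b'\to0}g(0,b')=1/(2\sqrt3)$, since $Y_{b'}$ tends to the uniform distribution and the condition $t_0\le\sqrt3$ keeps $t_0\sigma$ inside the support. So in this case it suffices to show $\sigma(a,b)\ge (a+1-e^{-b})/\sqrt3$. This would follow from Lemma~\ref{lem:intersections}, comparing $f_{a,b}$ with the uniform density on $[-(a+1-e^{-b}),a+1-e^{-b}]$. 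That density has the same height $c(a,b)$, so the two densities coincide on $[0,a]$ and cross exactly once afterwards.

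\textbf{Case $a< t_0\sigma$.} Here
\[
g(a,b)=\frac{\sigma(a,b)\,e^{a-t_0\sigma(a,b)}}{2(a+1-e^{-b})}.
\]
I would consider $h(a)=\log g(a,b)$ and compute
\[
h'(a)=\frac{\partial_a\sigma}{\sigma}\,(1-t_0\sigma)+1-\frac{1}{a+1-e^{-b}}.
\]
The aim is to show that $h$ is quasi-concave in $a$ on the interval where $a<t_0\sigma$, i.e.\ that $h'$ changes sign at most once, from $+$ to $-$. Then the minimum over that interval is attained at an endpoint: either $a=0$, which gives $g(0,b)$ as desired, or $a$ with $a=t_0\sigma(a,b)$, which belongs to the first case already handled. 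The ingredients are explicit formulas for $\sigma^2(a,b)$ (a polynomial in $a$ plus exponential terms in $b$, divided by $a+1-e^{-b}$), the monotonicity from Lemma~\ref{lem:variance}, and the bound $t_0\le\sqrt3$. The last is used through $t_0\sigma\le a+b$ from Lemma~\ref{lem:variance}, so the evaluation point never leaves the support.

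A cleaner alternative, which I would try first, is a reparametrization. Fix the variance, normalize $\sigma=1$, and let $\gamma$ vary instead. Then express the log-value at $t_0$ as a function of the position $a$ of the kink. One can also try to view $g$ as a function of $(a,b)$ restricted to a level curve of $\sigma$, and show that along such curves moving $a$ towards $0$ decreases $g$.

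The main obstacle I expect is the sign analysis of $h'$. It mixes rational terms in $a$ with $e^{-b}$ terms and the coupling through $t_0\sigma$. If quasi-concavity fails globally, I would fall back on a direct comparison. For given $(a,b)$, choose $b'$ so that $\sigma(0,b')$ in the $\gamma=1$ normalization matches a suitable rescaling. Then apply Lemma~\ref{lem:intersections}-type crossing arguments to the log-densities: two log-affine pieces with a single crossing should order the values at $t_0\sigma$ once the variances are matched.
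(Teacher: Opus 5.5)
Your first case ($a\ge t_0\sigma$) is correct. The uniform density of the same height $c(a,b)$ on $[-(a+1-e^{-b}),\,a+1-e^{-b}]$ crosses $f_{a,b}$ exactly once, so Lemma~\ref{lem:intersections} gives $\sqrt3\,\sigma\ge a+1-e^{-b}$ and hence $g(a,b)\ge\frac1{2\sqrt3}\ge\inf_{b'>0}g(0,b')$. This is an equivalent variant of Lemma~\ref{lem:constant_part}.

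The gap is in the second case, which carries all the difficulty. There $g=\frac{\sigma e^{a-t_0\sigma}}{2(a+1-e^{-b})}$ and your derivative formula is $\partial_a\log g$. The claim you build on is that for fixed $b$ the map $a\mapsto g(a,b)$ is quasi-concave on $\{a<t_0\sigma\}$, with its minimum at $a=0$ or at a point where $a=t_0\sigma$. This claim is false. Take $t_0\in[\frac1{\sqrt2},\sqrt3]$ and $b$ large:
\begin{itemize}
\item Since $(\sigma^2)_a(0,b)=\frac{e^{-b}(b^2+2b-2(b+1)(1-e^{-b}))}{(1-e^{-b})^2}\approx e^{-b}(b^2-2)$ and $\sigma(0,b)\approx\sqrt2$, you get $\partial_a\log g(0,b)\approx e^{-b}\bigl(\tfrac14(b^2-2)(1-t_0\sqrt2)-1\bigr)<0$.
\item For $e^{-b}b^2\ll a\ll1$ you have $(\sigma^2)_a\approx2a$, and hence $\partial_a\log g\approx a(\tfrac32-\tfrac{t_0}{\sqrt2})>0$.
\item Concretely, for $t_0=\sqrt3$ and $b=10$, $\partial_a\log g\approx-1.7\cdot10^{-3}$ at $a=0$ and $\approx+1.1\cdot10^{-3}$ at $a=0.01$, while $t_0\sigma\approx2.45$.
\end{itemize}
So $g(\cdot,b)$ first decreases and then increases. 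Its minimum in $a$ is an interior point, with value strictly below $g(0,b)$. Moreover, for $t_0=\sqrt3$ and $b>0$ no point with $a=t_0\sigma$ exists at all, because $\sqrt3\sigma\ge a+1-e^{-b}>a$; the other end of the range is $a\to\infty$. The proposition survives at these points only through comparison with \emph{other} values of $b$ (here $b\to\infty$). Hence no argument that reduces each slice $b=\mathrm{const}$ to its own endpoints can work, and your single-crossing fallback is too vague to supply the missing comparison.

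Your ``cleaner alternative'' is actually the right idea, but it is exactly where the missing work lies. Along a level curve $\{\sigma=s\}$, the quantity $t_0\sigma$ is constant and the tangent direction is $(\sigma_b,-\sigma_a)$. One computes
\[
\sigma_b\,\partial_a\log g-\sigma_a\,\partial_b\log g=\frac{e^{-b}\bigl((\sigma^2)_a+e^{b}(a-e^{-b})(\sigma^2)_b\bigr)}{2\sigma(a+1-e^{-b})}=\frac{e^{-b}a(2a^2+6ab+3b^2)}{6\sigma(a+1-e^{-b})^2}>0.
\]
The last step is the algebraic identity the paper uses to exclude interior critical points of $h$. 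So $g$ decreases as you slide along the level curve towards smaller $a$ (and larger $b$), and you stay inside $\{a<t_0\sigma\}$ while doing so.

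However, curves with $s\ge\sqrt2$ never reach $a=0$. They leave through $b=\infty$ at the point $a_\infty$ with $\sigma(a_\infty,\infty)=s$. You then still need $g(a_\infty,\infty)\ge g(0,\infty)=\lim_{b\to\infty}g(0,b)$, i.e.\ monotonicity of $h(\cdot,\infty)$. The paper proves this by reducing to $t_0=\sqrt3$ and checking $3a^5+19a^4+54a^3+90a^2+81a+27>0$.

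The identity and this $b=\infty$ edge are the two ingredients your proposal lacks. The paper packages them through a compactness and no-critical-point argument, together with $\partial_b h<0$ when $t_0\sigma\ge1$, rather than by following level curves.
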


Our first step is the following lemma.

\begin{lemma} \label{lem:constant_part}
    If $t_0\sigma(a,b) \leq a$, then inequality (\ref{eq:goal}) holds. 
\end{lemma}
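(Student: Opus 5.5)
The plan is to show that under the hypothesis $t_0\sigma(a,b)\le a$ the quantity $g(a,b)$ is at least $\frac{1}{2\sqrt3}$, and then that $\inf_{b>0} g(0,b)\le \frac{1}{2\sqrt3}$ by letting $b\to 0$.

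\emph{Step 1: simplifying $g(a,b)$.} If $t_0\sigma\le a$, then $\min(1,e^{-(t_0\sigma-a)})=1$. Hence
\[
g(a,b)=\frac{\sigma}{2(a+1-e^{-b})}=\sigma\, f_{a,b}(0)=\sigma\,\|f_{a,b}\|_\infty .
\]
So it suffices to prove the classical bound $\sigma_X\|f_X\|_\infty\ge \frac1{2\sqrt3}$ for our symmetric density.

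\emph{Step 2: the bound $\sigma_X\|f_X\|_\infty\ge \frac1{2\sqrt3}$.} Put $m=f_{a,b}(0)$ and let $Y$ be uniform on $[-\frac1{2m},\frac1{2m}]$, whose density $g_Y$ equals $m$ there. On $(0,\frac1{2m})$ we have $f_{a,b}\le m=g_Y$. On $(\frac1{2m},\infty)$ we have $f_{a,b}\ge 0=g_Y$. Lemma \ref{lem:intersections} with $t=\frac1{2m}$ then gives
\[
\sigma^2(a,b)\ge \var Y=\frac{1}{12m^2},
\]
that is, $g(a,b)=\sigma m\ge \frac1{2\sqrt3}$.

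\emph{Step 3: the limit $b\to0$.} For $a=0$, Lemma \ref{lem:variance} gives $\sqrt3\,\sigma(0,b)\le b$. Since $t_0\le\sqrt3$, this yields $t_0\sigma(0,b)\le b$, so the point $t_0\sigma(0,b)$ lies in the support. Therefore
\[
g(0,b)=\frac{\sigma(0,b)\,e^{-t_0\sigma(0,b)}}{2(1-e^{-b})}.
\]
As $b\to0$ the density $f_{0,b}$ becomes asymptotically uniform on $[-b,b]$. Hence $\sigma(0,b)=\frac{b}{\sqrt3}(1+o(1))$ and $e^{-t_0\sigma(0,b)}\to1$. Together with $1-e^{-b}\sim b$, this gives $g(0,b)\to \frac{1}{2\sqrt3}$.

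\emph{Conclusion and main obstacle.} Steps 2 and 3 give $\inf_{b>0}g(0,b)\le\frac1{2\sqrt3}\le g(a,b)$, which is inequality \eqref{eq:goal}. The only point requiring care is the asymptotic $\sigma(0,b)\sim b/\sqrt3$. I would verify it by a direct expansion of $\int_0^b x^2e^{-x}\,dx$, or by sandwiching $e^{-b}\le e^{-x}\le1$ inside the variance integral.
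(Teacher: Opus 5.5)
Your proof is correct and follows essentially the paper's approach: compare $f_{a,b}$ with a uniform density via Lemma \ref{lem:intersections} to get $\sigma f_{a,b}(0)\ge \frac{1}{2\sqrt3}$, then let $b\to0$ in $g(0,b)$. The only difference is that you match the uniform density's height to $f_{a,b}(0)$, whereas the paper matches its variance, using $a'=\sqrt3\sigma$ and counting sign changes of $f_{a,b}-f_{a',0}$. Your choice lets you apply the lemma directly with a single crossing point, so you avoid the sign-change and strictness argument.
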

\begin{proof}
    Let $a' = \sqrt{3}\sigma(a,b)$, so that $\sigma(a',0) = \sigma(a,b) =: \sigma$. Since $f_{a,b}-f_{a',0}$ integrates to $0$, it has to change sign at least once. But since $f_{a,b}$ and $f_{a',0}$ have the same variance, by Lemma \ref{lem:intersections} the difference must change sign in at least two points in $[0,\infty)$. Note that $f_{a',0}$ is uniform and $f_{a,b}$ is non-increasing on $[0,\infty)$ and thus two sign-change points are possible only if $f_{a',0}(0) \leq f_{a,b}(0)$ which gives $c(a',0) \leq c(a,b)$. Thus, 
\[
g(a',0) = \sigma f_{a',0}(t_0\sigma) = \sigma c(a',0) \leq  \sigma c(a,b) = \sigma f_{a,b}(t_0\sigma) = g(a,b).
\]
 Now we just note that 
\[
g(a',0) = \frac{\sigma}{2\sqrt{3}\sigma}=\frac{1}{2\sqrt{3}} = \lim_{b \to 0} g(0,b)  \geq \inf_{b>0} g(0,b),
\]       
where the last equality is a direct computation.
\end{proof}

\noindent The above lemma shows that inequality (\ref{eq:goal}) holds whenever $t_0\sigma$ lies in the interval where $f_{a,b}$ is constant. 
Thus, it remains to show that
\[
h(a,b) := h_{t_0}(a, b) = \frac{\sigma}{2(a+1-e^{-b})}e^{a-{t_0\sigma}} \geq \inf_{b>0} g(0,b)
\]
whenever $t_0\sigma > a$. Note that $\sigma$ and as a result also $h$ have limits when $b$ goes to infinity, thus it makes sense to extend the domain of $h$ to $B = [0, \infty) \times [0, \infty] \setminus \{(0, 0)\}$.

\begin{lemma}\label{lem:near_zero}
    The function $h$ can be extended to $[0, \infty) \times [0, \infty]$ by putting $h(0, 0) = \frac{1}{2\sqrt{3}}$, so that the resulting function is continuous at $(0, 0)$. 
\end{lemma}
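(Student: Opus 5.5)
We need to show $h(a,b)\to \frac{1}{2\sqrt3}$ as $(a,b)\to(0,0)$ within $[0,\infty)\times[0,\infty]$. The plan is to control the three factors of
\[
h(a,b)=\frac{\sigma}{2(a+1-e^{-b})}\,e^{a-t_0\sigma}
\]
separately. First, $\sigma(a,b)\le (a+b)/\sqrt3$ by the second part of Lemma \ref{lem:variance}. Hence $\sigma\to0$, and so $e^{a-t_0\sigma}\to 1$ as $(a,b)\to(0,0)$.

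The remaining factor is $\sigma/(2(a+1-e^{-b}))$. Its limit should be $\frac{1}{2\sqrt3}$, because for small $a,b$ the density $f_{a,b}$ is nearly uniform on $[-(a+b),a+b]$. To make this precise:

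1. Write $a+1-e^{-b}=(a+b)(1+O(b))$, since $1-e^{-b}=b-b^2/2+O(b^3)$.
2. For the variance, use
\[
\int_a^{a+b}e^{-(x-a)}x^2\,dx=\int_a^{a+b}x^2\,dx\,(1+O(b)),
\]
which holds because $e^{-(x-a)}\in[e^{-b},1]$ on the range of integration.
3. This gives
\[
\int_0^a x^2\,dx+\int_a^{a+b}e^{-(x-a)}x^2\,dx=\frac{(a+b)^3}{3}(1+O(b)).
\]
4. Combining with $c(a,b)=\bigl(2(a+b)(1+O(b))\bigr)^{-1}$ yields
\[
\sigma^2=\frac{(a+b)^2}{3}(1+O(b)),
\]
hence $\sigma/(a+1-e^{-b})\to 1/\sqrt3$. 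Here the error terms $O(b)$ are uniform in $a\ge0$, since all the estimates above are ratio bounds of the form $e^{-b}\le\cdot\le 1$.

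A cleaner way to organize steps 2–4 is as a two-sided sandwich. Since $e^{-b}\le e^{-(x-a)}\le 1$ on $[a,a+b]$, the density $f_{a,b}$ lies between $c\,e^{-b}$ and $c$ on its support $[-(a+b),a+b]$. This gives explicit inequalities $e^{-b}\le \sigma^2\cdot 3/(a+b)^2\cdot(\ldots)\le e^{b}$, and both bounds tend to $1$.

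The only subtle point, which I expect to be the main obstacle, is the domain. We must check that the approach to $(0,0)$ is within the region $t_0\sigma>a$ where $h$ replaces $g$, or more simply that $h$ itself is defined by the formula on all of $B$. Since the lemma concerns only $h$ as given by its formula, this is not actually an issue. We also need uniformity in the direction of approach, and this is handled by the sandwich bounds depending only on $b\to0$ and $a+b\to0$. Nothing about $b=\infty$ matters near $(0,0)$.
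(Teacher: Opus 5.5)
Your proof is correct. It rests on the same observation as the paper's, namely that $f_{a,b}$ is nearly flat on its support when $(a,b)$ is small, but you execute it differently.

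The paper first passes to $g$ using $g/h\to 1$. It then rescales $X_n$ to $X_n/(a_n+b_n)$ on $[-1,1]$, using scale invariance of $\sigma f(t_0\sigma)$. Since the max/min ratio of the rescaled densities tends to $1$, they converge uniformly to $\frac12\one_{[-1,1]}$. Hence $\sigma_n\to 1/\sqrt3$ and $\sigma_n f_n(t_0\sigma_n)\to\frac{1}{2\sqrt3}$.

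You instead estimate the factors of the closed form of $h$ directly. The precise version of your sandwich goes as follows:
\begin{itemize}
\item From $be^{-b}\le 1-e^{-b}\le b$ you get $e^{-b}(a+b)\le a+1-e^{-b}\le a+b$.
\item From $e^{-b}\le e^{-(x-a)}\le 1$ on $[a,a+b]$ you get $e^{-b}(a+b)^3/3\le \int_0^a x^2\,dx+\int_a^{a+b}e^{-(x-a)}x^2\,dx\le (a+b)^3/3$.
\item Since $\sigma^2$ is exactly the second quantity divided by $a+1-e^{-b}$, this gives $e^{-b}\le 3\sigma^2/(a+b)^2\le e^{b}$. So the placeholder factor $(\ldots)$ in your sandwich paragraph is unnecessary.
\item Consequently $e^{-b/2}\le \sqrt3\,\sigma/(a+1-e^{-b})\le e^{3b/2}$.
\end{itemize}

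Your route gives explicit bounds, uniform in $a\ge 0$. It also avoids both the detour through $g$ and any check that $t_0\sigma$ stays inside the support. The paper's route is softer and does not use the specific formula for the variance. Its core step applies verbatim to any family of symmetric densities whose max/min ratio on the support tends to $1$.
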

\begin{proof}
    Let $(a_n, b_n) \to (0, 0)$ as $n \to \infty$ and $(a_n, b_n) \in B$ for all $n$. Clearly $\sigma(a_n,b_n) \leq a_n+b_n \to 0$.  It is enough to show that $g(a_n,b_n) \to \frac{1}{2\sqrt{3}}$ since 
    \[
    \frac{g(a_n,b_n)}{h(a_n,b_n)} \to \min(e^{t_0 \sigma(a_n,b_n)},1) \to 1.
    \]
   
    Let $X_n$ be the random variable with density $f_{a_n,b_n}$ and consider $Y_n = \frac{1}{a_n +b_n}X_n$ which is a random variable supported on $[-1, 1]$. By scale invariance  we have 
    \[
     g(a_n,b_n) = \sigma_n f_n(t_0 \sigma_n),
    \]
where $f_n =  f_{\alpha_n, \beta_n, c_n, \gamma_n}$ so that parameters $\alpha_n, \beta_n, c_n, \gamma_n$ describe density of $Y_n$ and $\sigma_n^2 = \var Y_n$.  Note that for all $x$ in support of $f_{a_n, b_n}$ we have 
\[
f_{a_n, b_n}(0) \geq f_{a_n, b_n}(x) \geq e^{-(a_n+b_n)}f_{a_n, b_n}(0)
\]
and since this inequality is scale invariant, it also holds for $f_n$ and so the ratio of $\max_{[-1, 1]} f_n$ and $\min_{[-1, 1]}f_n$ is bounded by $e^{-(a_n + b_n)}$ which goes to $1$ as $n \to \infty$. Since $\int_{-1}^1 f_n(x) dx = 1$ it easily follows that $f_n$ converge uniformly to $\frac12 \one_{[-1,1]}$. Thus, $\sigma_n \to \frac{1}{\sqrt{3}}$ as $n \to \infty$ and $\sigma_nf(t_0\sigma_n) \to \frac{1}{2\sqrt{3}}$.
\end{proof}

\noindent From now on, we will consider this extended function $h$.

\begin{lemma}\label{lem:reduction}
    Let us denote $A = \{(a, \infty): a \geq 0\} \cup \{(a, 0): a \geq 0\} \cup \{(0, b): b > 0\}$. Then $\inf\{h(a,b): (a, b) \in B\} = \inf \{h(a, b): (a, b) \in A\}$.
\end{lemma}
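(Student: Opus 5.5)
The plan is to show that $h$ has no local minimum in the interior $(0,\infty)\times(0,\infty)$ of $B$, and then argue by compactness that the infimum over $B$ is approached on $A$. Recall that on $B$ the function $h$ is defined by the single analytic expression
\[
h(a,b)=\frac{\sigma}{2(a+1-e^{-b})}\,e^{a-t_0\sigma},\qquad \sigma=\sigma(a,b),
\]
and that $\sigma$ is a smooth function of $(a,b)$ in the interior of $B$.

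I will organize the argument along the level sets of the variance. Fix a value $s>0$ and consider the level set $\Gamma_s=\{(a,b)\in B:\sigma(a,b)=s\}$. By Lemma \ref{lem:variance}, $\sigma$ is non-decreasing in each variable; in fact the proof via Lemma \ref{lem:intersections} gives strict monotonicity. Hence $\Gamma_s$ is the graph of a non-increasing function $b=\beta_s(a)$ on some interval of $a$. Its endpoints lie on $\{a=0\}$ on one side, and on $\{b=0\}$ or $\{b=\infty\}$ on the other; the case $\{b=\infty\}$ occurs when $s$ is large, since $\sigma(a,\infty)$ is finite. In every case both endpoints lie in $A$. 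On $\Gamma_s$ the factors $\sigma$ and $e^{-t_0\sigma}$ are constant, so
\[
\log h = \text{const} + \varphi_s(a),\qquad \varphi_s(a)= a-\log\bigl(a+1-e^{-\beta_s(a)}\bigr).
\]
It therefore suffices to show that $\varphi_s$ has no interior local minimum. The cleanest sufficient condition is that $\varphi_s'$ changes sign at most once, and only from $+$ to $-$ (quasi-concavity). Granting this, on each level set $h$ is bounded below by its values at the two endpoints, which lie in $A$. Since $B=\bigcup_s\Gamma_s$, this gives $\inf_B h\ge \inf_A h$. The reverse inequality is trivial because $A\subseteq \overline{B}$ and $h$ is continuous there, using Lemma \ref{lem:near_zero} for the corner $(0,0)$ and continuity in $b$ at $b=\infty$.

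To analyse $\varphi_s'$, I will use implicit differentiation: $\beta_s'=-\partial_a\sigma^2/\partial_b\sigma^2$. Then
\[
\varphi_s'(a)=1-\frac{1+e^{-\beta_s}\beta_s'}{a+1-e^{-\beta_s}}.
\]
The partial derivatives of $\sigma^2=2c(a,b)\bigl(a^3/3+\int_a^{a+b}e^{-(x-a)}x^2\,dx\bigr)$ are explicit elementary functions of $a$, $b$ and $e^{-b}$, so the sign of $\varphi_s'$ reduces to the sign of an explicit expression $\Psi(a,b)$. This expression is evaluated on the curve, but one may equally study it on all of $B$. As an alternative if the one-dimensional route gets messy, I would check directly that at any interior critical point of $\log h$ the Hessian is not positive semidefinite. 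For this the natural first attempt is to show that the second derivative along $\Gamma_s$ is negative wherever $\varphi_s'=0$. Either version excludes interior minima.

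The main obstacle is precisely this sign analysis of $\Psi$ (or of the second derivative at critical points). It is a two-variable inequality involving polynomials in $a$ and $b$ multiplied by $e^{-b}$. I would first try to clear denominators, substitute $u=e^{-b}$, and group terms so that the condition $\varphi_s'=0$ can be used to eliminate $a$ (it is affine in $a$ after clearing denominators). This should leave a one-variable inequality in $b$ that can be verified by Taylor expansion for small $b$ and by monotonicity arguments for large $b$. A small amount of care is also needed for the boundary behaviour as $b\to\infty$: one must check that $\varphi_s$ extends continuously to that endpoint, which follows from the explicit limits of $\sigma$ and $c$.
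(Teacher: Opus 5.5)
Your reduction to level sets of $\sigma$ is sound, and it is a genuinely different route from the paper's. As written, however, the argument has a real gap. The only nontrivial step is showing that $\varphi_s$ has no interior local minimum, and you never prove it. You call it ``the main obstacle'' and offer only tentative strategies: quasi-concavity, a Hessian test, or eliminating $a$ from $\varphi_s'=0$, which you expect to be affine in $a$ --- it is not. Without a sign for $\varphi_s'$, nothing excludes an interior minimum on some $\Gamma_s$, so $\inf_B h\ge \inf_A h$ does not yet follow.

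The missing ingredient is an exact identity, after which no case analysis is needed. From your formula and $\beta_s'=-(\sigma^2)_a/(\sigma^2)_b$,
\[
\varphi_s'(a)=\frac{e^{-b}\bigl[(\sigma^2)_a+(ae^{b}-1)(\sigma^2)_b\bigr]}{(a+1-e^{-b})\,(\sigma^2)_b},\qquad b=\beta_s(a).
\]
Write $\sigma^2=N/D$ with $D=a+1-e^{-b}$ and $N=\frac{a^3}{3}+a^2+2a+2-e^{-b}\bigl((a+b)^2+2(a+b)+2\bigr)$. Then $N_b=e^{-b}(a+b)^2$, hence $(\sigma^2)_b=e^{-b}\bigl((a+b)^2-\sigma^2\bigr)/D>0$, and
\[
(\sigma^2)_a+(ae^{b}-1)(\sigma^2)_b=\frac{N_a-N+(a-e^{-b})(a+b)^2}{D}=\frac{a(2a^2+6ab+3b^2)}{3(a+1-e^{-b})}.
\]
Therefore
\[
\varphi_s'(a)=\frac{a(2a^2+6ab+3b^2)}{3(a+1-e^{-b})\bigl((a+b)^2-s^2\bigr)}>0 \qquad (a>0).
\]
So $h$ is strictly increasing in $a$ along every level set, and $h(a,b)$ dominates its value at the smallest-$a$ endpoint of $\Gamma_{\sigma(a,b)}$.

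This is exactly the identity in the paper's proof, but the paper uses it differently. It combines $\partial_a h=0$ and $\partial_b h=0$ into $(\sigma^2)_a+(ae^b-1)(\sigma^2)_b=0$, concludes that $h$ has no interior critical point, and finishes with a compactness argument. To keep minimizing sequences bounded, it must first dispose of the cases $t_0\sigma\ge1$ (monotonicity in $b$), $t_0\sigma<a$ (Lemma~\ref{lem:constant_part}) and $a\ge1$. Your level-set version, once the identity is inserted, gives a pointwise bound for every $t_0$, with no compactness and none of those reductions.

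One correction to your description of the endpoints. Since $\sigma(0,b)<\sqrt2=\sigma(0,\infty)$, for $s\ge\sqrt2$ it is the $a=0$ endpoint of $\Gamma_s$ that is replaced by a point $(a_1(s),\infty)$. The other endpoint is always $(\sqrt3\,s,0)$. The relevant smallest-$a$ endpoint is thus $(0,b_s)$ or $(a_1(s),\infty)$, both in $A$, so the conclusion stands.
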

\begin{proof} 
    We denote $c(t_0) = \inf_{A} h(a,b)$. We will also use notation $F_a = \frac{d}{da} F$ and $F_b = \frac{d}{db} F$ for partial derivatives of functions with respect to $a,b$. We differentiate $h(a,b)$.
\begin{align*}
    \frac{d}{da}h(a,b) &=  \frac{h(a,b)}{\sigma(a+1-e^{-b})}\left(\left(\sigma_a + \sigma (1-t_0\sigma_a)\right)(a+1-e^{-b}) - \sigma\right) \\
    &= \frac{h(a,b)}{\sigma^2(a+1-e^{-b})}\left(\frac12(\sigma^2)_a (1-t_0\sigma)(a+1-e^{-b}) + (a-e^{-b})\sigma^2\right),\\
    \frac{d}{db} h(a,b) &= \frac{h(a,b)}{\sigma(a+1-e^{-b})}\left((\sigma_b - t_0\sigma\sigma_b)(a+1-e^{-b}) - e^{-b}\sigma\right) \\
    &= \frac{h(a,b)}{\sigma^2(a+1-e^{-b})}\left(\frac12 (\sigma^2)_b (1-t_0\sigma)(a+1-e^{-b}) - e^{-b}\sigma^2\right).
\end{align*}
From this and Lemma \ref{lem:variance} we can see that if $t_0\sigma \geq 1$, then $\frac{d}{db} h(a,b) < 0$. Since $\sigma$ is increasing in $b$, this inequality holds in all points $h(a, b')$ with $b' \geq b$ and thus $h(a, b) > h(a, \infty) \geq c(t_0)$. By Lemma \ref{lem:constant_part} we can assume that $t_0\sigma \geq a$, otherwise 
\[
h(a, b) \geq g(a,b) \geq \inf_{b>0} g(0,b) = \inf_{b>0} h(0,b) \geq c(t_0).
\]
If additionally $a \geq 1$ then we have $t_0\sigma \geq 1$ and thus $h(a,b) \geq c(t_0)$. From now on, we shall assume that $a <1$ and $t_0\sigma < 1$.

Suppose that $(a, b) \in \mathrm{int}B$ is a critical point of $h$. Then, we must have $\frac{d}{da}h(a,b) = \frac{d}{db} h(a, b) = 0.$ Note that the second equality implies $1-t_0\sigma \neq 0$. We have
\begin{equation}\label{eq:crit_point}
    -(\sigma^2)_a  = \frac{2(a-e^{-b})\sigma^2}{(1-t_0\sigma)(a+1-e^{-b})} =  e^b(a-e^{-b})(\sigma^2)_b.
\end{equation}
We will now calculate the appropriate derivatives of $\sigma^2$. We have
$$\sigma^2 = 2 + \frac{a^3 + 3a^2 - 3e^{-b}(a+b)(a+b+2)}{3(a+1-e^{-b})}.$$
Thus,
\begin{align*}
(\sigma^2)_a &= (3(a+1-e^{-b})^{2})^{-1}\Big((3a^2 + 6a - 6e^{-b}(a + b + 1))(a+1-e^{-b}) \\
&-(a^3 + 3a^2 -3e^{-b}(a+b)(a+b+2))\Big),\\
    (\sigma^2)_b &= e^{-b}(3(a+1-e^{-b})^{2})^{-1}\Big(3((a+b)^2 - 2)(a+1-e^{-b}) \\
    &- (a^3 + 3a^2 -3e^{-b}(a+b)(a+b+2))\Big).
\end{align*}
With purely algebraical manipulations one can see that
\[
(\sigma^2)_a + e^b( a-e^{-b})(\sigma^2)_b = \frac{a(2a^2 + 6ab + 3b^2)}{3(a+1-e^{-b})}.
\]
This is positive for $a>0$, thus equality (\ref{eq:crit_point}) cannot hold and therefore there are no critical points of $h(a,b)$ in $\mathrm{int} \, B$. 

Let us now assume that the minimum of $h$ is attained on $[0,1] \times [0,\infty]$ and take a sequence of points $(a_n, b_n) \in B$ such that $h(a_n, b_n)$ is decreasing and converges to $c(t_0)$ and assume that $a_n \leq 1$ for all $n$. By passing to a subsequence, we might assume that $(a_n,b_n) \to (a,b) \in [0,1] \times [0,\infty]$. Since $h$ does not have critical points in $\mathrm{int} \, B$, we get that either $(a,b) \in A$, in which case the proof is finished, or that $a=1$, which leads to $h(1, b) \geq c(t_0)$ by the previous part.   
 \end{proof}

 \begin{proof}[Proof of Proposition \ref{prob:thm1a}]
   By Lemma \ref{lem:constant_part} and Lemma \ref{lem:reduction} we only need to check the inequality $h(a,b) \geq \inf_{b>0} g(0,b)$  in the set $A$. Since $h(0,b)=g(0,b)$ the inequality clearly holds for $a = 0$.
      If $b = 0$, then the distribution is uniform and we have $h(a,0) \geq g(a,0) =   \frac{1}{2\sqrt{3}} = \lim_{b \to 0} g(0,b)$.
      
     We now turn to the case $b = \infty$. We have
    \[
    \sigma^2(a, \infty) =  \frac{a^3+3a^2+6a+6}{3(a+1)}, \quad (\sigma^2)_a(a, \infty) = \frac{2a(a^2+3a+3)}{3(a+1)^2}.
    \]
     We will prove the inequality $\frac{d}{da}h(a, \infty) \geq 0$. It reads
     $$\frac{d}{da}h(a, \infty) = \frac{h(a, \infty)}{\sigma^2(a+1)}\left(\frac12(\sigma^2)_a(1-t_0\sigma)(a+1) + a\sigma^2\right) \geq 0.$$
     We notice that it is enough to prove the inequality at the largest possible value of $t_0$, that is $\sqrt{3}$. After canceling positive common factors we get an equivalent form
    \[
    (\sigma^2)_a(1-\sqrt{3}\sigma)(a+1) + 2 a\sigma^2 \geq 0.
    \]
    This is
    \[
    	\frac{2a(a^2+3a+3)}{3(a+1)}\left (1- \sqrt{ \frac{a^3+3a^2+6a+6}{a+1}} \right) +  \frac{2a(a^3+3a^2+6a+6)}{3(a+1)} \geq 0.
    \]
    Canceling the common factor $\frac{2a}{3(a+1)}$ and rearranging yields
    \[
    (a^3 + 4a^2 + 9a + 9)\sqrt{a+1} \geq (a^2+3a+3)\sqrt{ a^3+3a^2+6a+6},
    \]	
which is true since
\[
	 (a^3 + 4a^2 + 9a + 9)^2(a+1) - (a^2+3a+3)^2( a^3+3a^2+6a+6) = 3 a^5+19 a^4+54 a^3+90 a^2+81 a+27 > 0.
\]   
Thus, for all $t_0 \in [0, \sqrt{3}]$ and all $a > 0$ we have $h_{t_0}(a, \infty) \geq h_{t_0}(0, \infty) = g(0,\infty)$.
 \end{proof}

\section{Proof of Theorem \ref{thm:main}(\emph{b})}

\noindent Now that we have shown the first part of Theorem \ref{thm:main}, we can restrict ourselves to random variables $Y_b$. We will use notation $\sigma(b) = \sigma(0,b) = \sqrt{2-\frac{e^{-b}b(b+2)}{1-e^{-b}}}$. For $b > 0$ let
\begin{align*}
    G_{t_0}(b) &= g_{t_0}(0, b) =  \sigma_{Y_b}f_{Y_b}(t_0\sigma_{Y_b}) = \sigma(b)\frac{e^{-t_0\sigma(b)}}{2(1-e^{-b})} \\
    &= \frac{1}{2(1-e^{-b})}\sqrt{2 - \frac{e^{-b}b(b+2)}{1-e^{-b}}}\exp\left(-t_0 \sqrt{2 - \frac{e^{-b}b(b+2)}{1-e^{-b}}}\right).
\end{align*}
We also put $G_{t_0}(0) = \frac{1}{2\sqrt{3}}$ and $G_{t_0}(\infty) = \frac1{\sqrt{2}}e^{-\sqrt{2}t_0}$. Then $G_{t_0}$ is continuous on $[0, \infty]$.

    It is enough to show that $G_{t_0}(b)' \leq 0$ for all $b > 0$ if $t_0 \geq \frac{1}{\sqrt{2}}$ and that $G_{t_0}(b)' > 0$ for sufficiently large $b$ (dependent on $t_0$) for $t_0 < \frac{1}{\sqrt{2}}$.

    \noindent We have 
    \[
    G_{t_0}(b)' = \frac{G_{t_0}(b)}{(e^b-1)\sigma(b)^2}\left(\frac12\left(\sigma(b)^2\right)'(1-t_0\sigma(b))(e^b-1) - \sigma(b)^2\right).
    \]
    Thus, $G_{t_0}(b)' \leq 0$ if and only if
    \begin{equation}\label{eq:pałowanie1}
        \left(\sigma(b)^2\right)'(e^b-1)(1-t_0\sigma(b)) - 2\sigma(b)^2 \leq 0.
    \end{equation}
    We have
    \[
    \left(\sigma(b)^2\right)' = \frac{e^b(b^2 - 2)+ 2b +2}{(e^b-1)^2}.
    \]
    The inequality (\ref{eq:pałowanie1}) now takes the form
    \begin{align*}
        &(e^b(b^2 - 2)+ 2b +2)(1-t_0\sigma(b)) - (e^b-1)\left(2 - \frac{b(b+2)}{e^b-1}\right) \\
        =& (e^b(b^2-2) + 2b+2)(1-t_0\sigma(b)) - 2\left(2e^b - 2 - b(b+2)\right) \leq 0.
    \end{align*}
    We can see that $\lim_{b \to \infty} \sigma(b) = \sqrt{2}$ and $\sigma(b)$ is increasing. Thus, for $t_0 < \frac{1}{\sqrt{2}}$ we have
    \begin{align*}
        (e^b(b^2-2) + 2b+2)(1-t_0\sigma(b)) - 2\left(2e^b - 2 - b(b+2)\right) \geq e^b(b^2-2)(1 - t_0\sqrt{2}) - 4 e^b.
    \end{align*}
    If $t_0 < \frac{1}{\sqrt{2}}$, then this is positive for large enough $b$, thus the function $G_{t_0}$ is eventually increasing. This means that there exists a finite $b_0 \geq0$ such that $G_{t_0}(b_0) < G_{t_0}(\infty)$.

    We will now show that for $t_0 \geq \frac{1}{\sqrt{2}}$ we have $G_{t_0}'(b) \leq 0$ for all $b > 0$.
    We can see that the expression on the left side of (\ref{eq:pałowanie1}) is decreasing in $t_0$. Thus, it is enough to show non-positivity of $G_{1/\sqrt{2}}'$. That is, we shall prove that
    \begin{equation}\label{eq:pałowanie_sqrt2}
        \left(\sigma(b)^2\right)'(e^b-1)\left(1-\frac{\sigma(b)}{\sqrt{2}}\right) - 2\sigma(b)^2 \leq 0.
    \end{equation}
    Since $[\sigma(b)^2]', e^b-1, \sigma(b) \geq 0$, after rearranging terms and squaring this will follow from
    \[
    2\left([\sigma(b)^2]'(e^b-1) - 2\sigma(b)^2\right)^2 \leq
    \left([\sigma(b)^2]'\right)^2(e^b-1)^2\sigma(b)^2.
    \]
    Using formulas for $\sigma(b)^2, [\sigma(b)^2]'$ and canceling the $(e^b-1)$ terms our target inequality takes the form
    \[
    2(e^b-1)\left(e^b(b^2-6) + 2b^2 + 6b + 6\right)^2 \leq \left(e^b(b^2-2) + 2b + 2\right)^2 \cdot \left(2e^b - (b^2 + 2b +2)\right).
    \]
    We rewrite the above inequality as
    \[
    P_3(b)e^{3b} + P_2(b)e^{2b} + P_1(b)e^b + P_0(b) \geq 0,
    \]
    where $P_0, P_1, P_2, P_3$ are polynomials given by
    \begin{align*}
        P_0(b) &= 4b^4 + 32b^3 + 92b^2 + 120b + 64, \\
        P_1(b) &= -4b^5 - 12b^4 - 32b^3 - 120b^2 - 240b - 192,\\
        P_2(b) &= -b^6 - 2b^5 -4b^4 -8b^3 + 12b^2 +120b + 192, \\
        P_3(b) &= 16b^2 - 64.
    \end{align*}
    All of the above polynomials are divisible by $b+2$. We define $Q_i(b) = P_i(b)/(b+2)$ and we have
    \begin{align*}
        Q_0(b) &= 4b^3 + 24b^2 + 44b + 32,\\
        Q_1(b) &= -4b^4-4b^3-24b^2-72b-96,\\
        Q_2(b) &= -b^5-4b^3+12b+96,\\
        Q_3(b) &= 16b-32.
    \end{align*}
    Now we write $Q_3(b)e^{3b} + Q_2(b)e^{2b} + Q_1(b)e^b + Q_0(b) = \sum_{n=0}^{\infty} q_nb^n$. Since $b\geq 0$, the theorem will follow from $q_n \geq 0$ for all $n$. By direct calculation we have $q_0=q_1=q_2=q_3=q_4 = 0$. For $n \geq 5$ we have the formula 
    \begin{align*}
       n!q_n = & -4 n^4+20 n^3-56 n^2-32 n-96  \\
       & + 2^n\left(-\frac{n^5}{32}+\frac{5 n^4}{16}-\frac{51 n^3}{32}+\frac{49 n^2}{16}+\frac{17 n}{4}+96\right) \\
       & + 3^n\left(\frac{16n}{3} - 32\right).
    \end{align*}
    We verify using direct numerical computation that for $n\leq 25$ these are non-negative (in fact, $q_n = 0$ for $n \leq 8$). For $n > 25$ we have
    \[
    -4 n^4+20 n^3-56 n^2-32 n-96  = -4 n^4+ (18 n^3-56 n^2) + (n^3-32 n) + (n^3-96)  \geq  -4 n^4.
    \]
    Moreover
    \[
    	-\frac{n^5}{32}+\frac{5 n^4}{16}-\frac{51 n^3}{32}+\frac{49 n^2}{16}+\frac{17 n}{4}+96 >  -\frac{n^5}{32}+\frac{5 n^4}{16}-\frac{51 n^3}{32} = -\frac{n^5}{32}+ \frac{n^3}{32} \left(10n - 50\right) >  -\frac{n^5}{32}
    \]
and $ \frac{16n}{3} - 32 > 2n$. This gives
\[
	 n!q_n  > -4n^4 - \frac{n^5}{32} \cdot 2^n + 3^n \cdot 2n > - \frac{n^5}{16} \cdot 2^n + 3^n \cdot 2n > 0.  
\]

\section{Proof of Theorem \ref{thm:main}(\emph{c})}

The largest $t_0$ for which the inequality 
\[
G_{t_0}(b) = \frac{1}{2(1-e^{-b})} \sigma(b) \exp(-t_0 \sigma(b) ) \geq \frac{1}{2\sqrt{3}}
\]
holds for all $b > 0$ is given by
\[
	c_0 = \inf_{b  >0}  \ \frac{1}{\sigma(b)} \log \left( \frac{\sqrt{3} \sigma(b)}{1-e^{-b}} \right) \approx 0.63217.
\]
Now we will give a formal proof that the uniform distribution is optimal for $t_0 \leq \frac12$.

We proceed similarly as in the previous part of the proof. We want to show that $G_{t_0}'(b) \geq 0$ when $t_0 \leq \frac 12$. This can be written as
\[
[\sigma(b)^2]'(e^b-1)(1 - t_0\sigma(b)) - 2\sigma(b)^2 \geq 0.
\]
We see that it is enough to check this for $t_0 = \frac12$. We equivalently have
\begin{equation}\label{eq:unif_pala}
2[\sigma(b)^2]'(e^b-1) - 4\sigma(b)^2 \geq \sigma(b) [\sigma(b)^2]'(e^b-1).
\end{equation}
The left hand side is 
\[
	\frac{2 \left(e^b(b^2-6) +2b^2+6b+6\right)}{e^b-1}
\]
and since $p(b):=e^b(b^2-6) +2b^2+6b+6$ satisfies $p(0)=p'(0)=p''(0)=0$ and $p'''(b)=b(b+6)e^b \geq 0$, we have $p(b) \geq 0$ and we can square both sides of \eqref{eq:unif_pala} to get an equivalent form
\[
	(2[\sigma(b)^2]'(e^b-1) - 4\sigma(b)^2)^2 \geq \sigma(b)^2 ([\sigma(b)^2]')^2(e^b-1)^2.
\]
After multiplying by $(e^b-1)^3$ we get
\[
4 \left(e^b(b^2-6) +2b^2+6b+6\right)^2(e^b-1) \geq \left(-b^2-2 b+2 e^b-2\right) \left(e^b b^2+2 b-2 e^b+2\right)^2. 
\]
This inequality  can be written in the form
\[
P_3(b)e^{3b} + P_2(b) e^{2b} + P_1(b)e^b + P_0(b) \geq 0,
\]
this time $P_0, P_1, P_2, P_3$ are given by
\begin{align*}
    P_0(b) &= -12b^4 - 80b^3 -212b^2 - 264b - 136, \\
    P_1(b) &= 4b^5 + 12b^4 + 56b^3 + 264b^2 + 528b + 408, \\
    P_2(b) &= b^6+2 b^5+10 b^4+32 b^3-12 b^2-264 b-408, \\
    P_3(b) &= 2b^4 - 40b^2 + 136.
\end{align*}
We expand into power series $P_3(b)e^{3b} + P_2(b) e^{2b} + P_1(b)e^b + P_0(b) = \sum_{n=0}^{\infty}p_nb^n$ and once again we will see that all coefficients are non-negative. Indeed, one can manually check that for $n\leq 5$ we have $p_n = 0$. For $n \geq 6$ we have formula
\begin{align*}
n!p_n &= 4 n^5-28 n^4+124 n^3+28 n^2 +400 n+408 \\
 & \quad +3^n \left(\frac{2 n^4}{81}-\frac{4 n^3}{27}-\frac{338 n^2}{81}+\frac{116 n}{27}+136\right) \\
& \quad +2^n \left(\frac{n^6}{64}-\frac{11 n^5}{64}+\frac{85 n^4}{64}-\frac{69 n^3}{64}-\frac{223 n^2}{32}-\frac{1001 n}{8}-408\right).
\end{align*}
For $n \geq 6$ we have 
\[
4 n^5-28 n^4+124 n^3+28 n^2 +400 n+408 \geq 0.
\]
Moreover, for $n \geq 18$
\[
\frac{2 n^4}{81}-\frac{4 n^3}{27}-\frac{338 n^2}{81}+\frac{116 n}{27}+136 \geq \frac{2 n^4}{81}-\frac{4 n^3}{27}-\frac{23 n^3}{81} = \frac{n^3}{81}(2n-35)>0.
\]
Finally, for $n \geq 18$ we have $\frac{223 }{32} n^2 \leq \frac{223 }{32 \cdot 18} n^3$, $\frac{1001 }{8} n \leq \frac{1001 }{8\cdot 18^2} n^3$ and $408 \leq \frac{408}{18^3} n^3$ and thus
\begin{align*}
\frac{n^6}{64}-\frac{11 n^5}{64}+\frac{85 n^4}{64}-\frac{69 n^3}{64}-\frac{223 n^2}{32}-\frac{1001 n}{8}-408 > \frac{n^6}{64}-\frac{11 n^5}{64}+\frac{85 n^4}{64}-2n^3 > 0.
\end{align*}
We again directly verify  that $n!p_n \geq 0$ holds for $n \leq 17$.


\begin{thebibliography}{9} 

\bibitem{B74} Borell, C., Convex measures on locally convex spaces,  Arkiv för Matematik 12 (1974), 239--252.   

\bibitem{B86} Bourgain, J., On high-dimensional maximal functions associated to convex bodies, Amer. J. Math. 108 (1986), no.
6, 1467--1476.

\bibitem{BGVV14} Brazitikos, S., Giannopoulos, A., Valettas, P., Vritsiou, B-H., Geometry of isotropic convex bodies. Mathematical Surveys and Monographs, 196. American Mathematical Society, Providence, RI, 2014.

\bibitem{FG06} Fradelizi, M., and Gu\'edon, O., A generalized localization theorem and geometric inequalities for convex bodies, Adv. Math. 204 no. 2 (2006), 509--529.

\bibitem{KL25} Klartag, B., Lehec, J., Affirmative Resolution of Bourgain's Slicing Problem using Guan's Bound, Geom. Funct. Anal. (GAFA) 35, (2025), 1147--1168.

\bibitem{MNT21}
Madiman, M., Nayar, P., Tkocz, T., Sharp moment-entropy inequalities and capacity bounds for logconcave distributions, IEEE Trans. Inform. Theory 67 (2021), no. 1, 81--94.


\bibitem{MTW26} Melbourne, J., Tkocz, T., Wyczesany, K., A Rényi entropy interpretation of anti-concentration and noncentral sections of convex bodies, Commun. Contemp. Math. 28 (2026), no. 1, Paper No. 2550033, 18 pp.

\bibitem{NT23}  Nayar, P., Tkocz, T., Extremal sections and projections of certain convex bodies: a survey, Harmonic analysis and convexity, 343--390, Adv. Anal. Geom., 9, De Gruyter, Berlin, 2023.
 
\end{thebibliography}
\end{document}